\def \Sp {\mathbb{S}^1}
\def \vphi {\varphi}
\def \t {\mathsf{t}_*}
\def \x {\mathbf{x}}
\def \z {\mathbold{z}}
\def \tten {\mathsf{T}_{10}}
\newtheorem{thm}{Theorem}
\newtheorem{cor}{Corollary}
\begin{document}
	\author{ Maxim Arnold}
	\address{University of Texas at Dallas, 800, W. 
		Campbell rd., Richardson, TX, 75080, USA}
		\email{maxim.arnold@utdallas.edu}
		\author{Anatoly Eydelzon}
			
	\subjclass{Primary 37E10; Secondary 11J70, }
	\keywords{invariant measure, projective transformation}
	\date{\today}
	
	\title{Finite Gau\ss{} Transformation.}
	\maketitle
	\section{Introduction.}

Usual way to represent real number $\x\in (0,1)$ is via its infinite decimal 
fraction. 
More formally, consider the 
transformation $\tten (\x)=\{10 \x\}=10\x-\lfloor{10\x}\rfloor$ 
where 
$\lfloor{\x}\rfloor$ and $\{\x\}$ 
denote integer 
and fractional parts of 
$\x$ 
correspondingly. Since transformation $\mathsf{Dec}$ is 
ten-folded, to be able to restore $\x$ from its image, one has to get track 
of the lost information. 
Partition the unit interval onto ten parts $(0,1]=\bigcup\limits_{j=1}^{10} 
\left(\frac{j-1}{10},\frac{j}{10}\right]$ and denote $a_n=\lfloor 10 
\tten^{n-1}( 
\x)\rfloor$. In other words,  sequence $\{a_n\}$ encodes the trajectory of 
the point 
$\x$ under the transformation $\mathsf{Dec}$ subject to the above 
partition. Then 
$0.a_1a_2\ldots$ is the decimal fraction representation of $\x$.   
\emph{Convergent} 
$\x_n=0.a_1a_2\ldots a_n$ corresponds to the whole segment of the 
initial points lying in the $10^{-n}$ vicinity since for all these points the 
first $n$ symbols in the encoding of the trajectory coincides. Thus to 
achieve the $\varepsilon$ - accuracy of the approximation by the 
convergents one has to consider $\log \frac{1}{\varepsilon}$ iterations of 
the transformation $\tten$.   

 Another representation can be achieved via famous 
Gauss transformation: $\mathsf{G}\x=\{1/\x\}$. Again, one could 
encode the trajectory of any point $\x\in [0,1]$ 
by the information which is lost at each step. 
Partition the unit interval as 
$(0,1]=\bigcup\limits_{n=1}^{\infty}\left(\frac{1}{n+1},\frac{1}{n}\right]$ 
and denote 
$a_k=\lfloor1/(\mathsf{G}^k\x) \rfloor$. Then the numbers $a_k$ will 
correspond to 
the the elements 
of continued fraction representation
\[\x=\tfrac{1}{a_1+\tfrac{1}{a_2+\tfrac{1}{a_3+\dots}}}.\] 
Convergents obtained by from the first digits of the continuous fraction 
provide the best rational approximation of the point $\x$.  However,  
as a payoff 
one has to use infinite set of symbols to encode the trajectory.

  In the present note we consider the \emph{Finite Gau\ss{} 
  Transformation}. It uses the partition similar to those for the Gau\ss{}, but 
  having only finite number of the intervals. 
  We present invariant density for such transformation and discuss several 
  examples together with periodic orbits.
  
We start with the following transformation of the circle $\Sp$. Fix 
	$n\in \mathbb{N}$ and consider the regular $n$-gon 
	$\mathbf{P}=\{P_1,\dots,P_n\}$. 
	circumscribed 
	around $\Sp$. Denote by $A_j$ the point of tangency of $\Sp$ with the 
	$j$-th side $P_jP_{j+1}$. Points $A_j$ partition 
	$\Sp$ onto $n$ arcs. Transformation $T:\Sp\mapsto
	\Sp$ is 
	defined as follows (see Fig. \ref{fig:Ngon}):
	\begin{equation}
	\label{eq:Finite_Gauss}
	T\z=\Sp\cap P_j\z
	\end{equation}
	\begin{figure}[!h]
		\centering
		\includegraphics[width=0.4\textwidth]{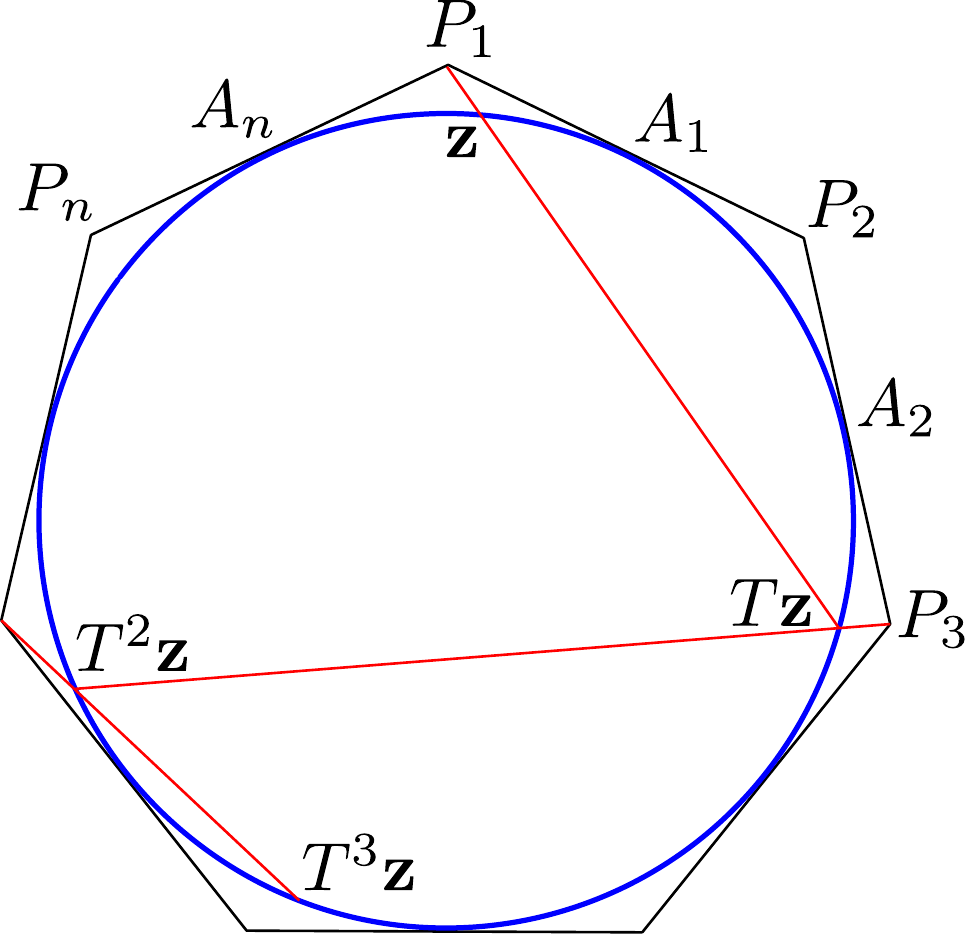}
		\caption{Finite Gau\ss{} transformation.}\label{fig:Ngon}
	\end{figure}
	
Our main result concerns the invariant measure of the  transformation 
\eqref{eq:Finite_Gauss}.  We claim that this invariant measure 
is absolutely continuous with respect to the Lebesgue measure with the 
infinite density  having the form 
\begin{equation}
\label{eq:invariant_density}
\rho(\vphi)=\frac{\sin\frac{\pi}{n}}{\cos\frac{\pi}{n}-\cos\left\{\frac{n\varphi}{2\pi}\right\}}.
\end{equation} In the next section we present a proof of this result and 
discuss its connection to Gau\ss{} transformation.
 

For $n=3$ transformation \eqref{eq:Finite_Gauss} was considered in 
\cite{Bogoyavlenski}, \cite{Bogoyavlenski_book} as describing the mass 
evolution in the cosmological models of Bianchi type. Amazingly, this
transformation for $n=4$ has been considered 
only thirty years later describing the dynamics of Pythagorean triples (see 
\cite{Romik}).  
\subsection*{Acknowledgments.} Authors are deeply indebted to Ya.G. 
Sinai , 
O.I. Bogoyavlenski, Yu.M. Baryshnikov  and S.L. Tabachnikov for fruitful 
discussions and constant support.
	
	\section{Computations.}
	
	We start with parametrization of $\Sp$  by the stereographic projection  
	from 
	$\mathbb{R}$ to $\Sp$. The 
	coordinates of 
	a point $\z\in \Sp$ 
	are given by 
	$\z=\left(\frac{2t}{1+t^2},\frac{1-t^2}{1+t^2}\right)$ and 	point $A_1$  
	correspond to the value $\t=\tan \frac{\pi}{2n}$. Vertex $P_1$ has 
	coordinates $\left(0,\frac{1}{\cos\frac{\pi}{n}}\right)$. 

	Transformation \eqref{eq:Finite_Gauss} is defined as
	\begin{equation}
	\label{eq:transform_gen}
	T\z=\z+\tau (\z-P_1), \qquad \tau =\frac{2\z\cdot 
	(P_1-\z)}{|\z-P_1|^2}.
	\end{equation}
	Since $\t$ is the tangent 
	of 
		the half of $\frac{\pi}{n}$, we get the expressions
		$\cos\frac{\pi}{n}=\frac{1-\t^2}{1+\t^2}$ and 
		$\sin\frac{\pi}{n}=\frac{2\t}{1+\t^2}$. Therefore $\frac{1}{\cos 
			\frac{\pi}{n}}=\frac{1+\t^2}{1-\t^2}$ and so 
			
			$$\tau=\dfrac{\left(\t^2-1\right)\left(t^2-\t^2\right)}{t^2+\t^4}.$$

Immediate computation shows  that 
$T\z=\left(\frac{2s}{1+s^2},\frac{1-s^2}{1+s^2}\right)$ where 		
\begin{equation}\label{eq:SfromT}
		s=Q(t)=\dfrac{\t^2}{t}:[-\t,\t]\mapsto [-\t,\t]^c.
		\end{equation}
		
		\begin{figure}[hbt]
			\centering
			\includegraphics[width=0.4\textwidth]{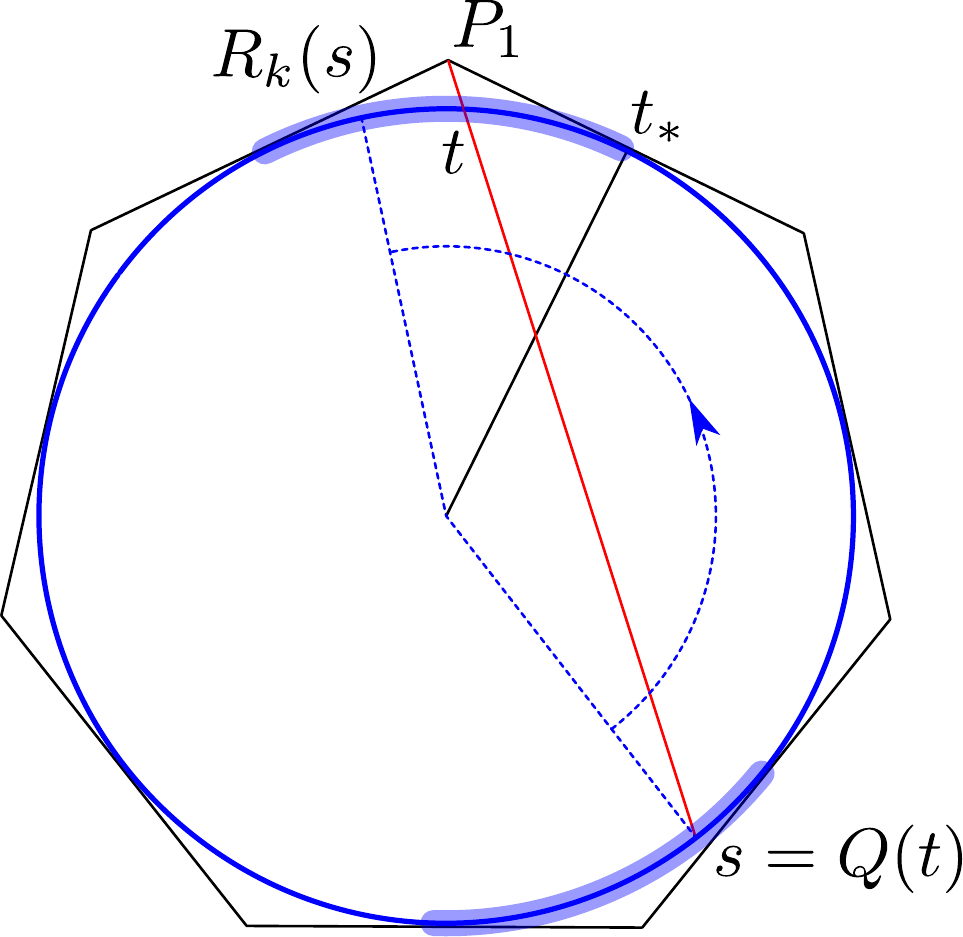}
			\caption{Transformation on the segment $[-\t,\t]$ is the 
			composition 
				of the transformation $Q$ and rotation $R_k$.}
		\end{figure}
		
	In order to iterate our transformation we have to rotate the image $T\z$ 
	back to the original interval $[-\t,\t]$.
	by the rotation 
	
	\begin{equation}
	\label{eq:s_rotation}R_ks= \dfrac{a_{k,n} s-b_{k,n}}{b_{k,n}s 
	+a_{k,n}},\quad \mbox{ for } s\in 
		\left[\tan\frac{\pi(2k-1)}{2n}, 
		\tan 
	\frac{\pi(2k+1)}{2n}\right].
	\end{equation}
	here $a_{k,n}=\cos 
	\frac{\pi k}{n}$ and $b_{k,n}=\sin \frac{\pi k}{n}$.
	Thus we can associate $T$ with the transformation $\mathsf{F}$ of the 
	interval 
	$[-\t,\t]$ having the form 
	\begin{equation}
	\label{eq:F_k}
	\begin{aligned}
	\mathsf{F}(t)=&(R_k\circ Q) (t)=\frac{-b_{k,n} t+a_{k,n} 
		\t^2}{a_{k,n} t+b_{k,n}\t^2}, \\ &\mbox{
		for }\,
	%
	t\in \left[Q^{-1} \left(\tan\tfrac{\pi(2k+1)}{2n}\right), 
	Q^{-1}\left(\tan\tfrac{\pi(2k-1)}{2n}\right)\right]
	\end{aligned}
	\end{equation}

	\begin{thm}
	Transformation \eqref{eq:F_k} has invariant measure with the 
	density $\rho(t)=\dfrac{2\t}{(t^2-\t^2)}$.
	\end{thm}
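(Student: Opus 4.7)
The plan is to verify the invariance of $\rho$ directly through the Perron--Frobenius (transfer) operator. Each branch $\mathsf{F}_k$ of \eqref{eq:F_k} is a Möbius bijection from its subinterval of $[-\t,\t]$ onto the whole $[-\t,\t]$, so the invariance condition reads
\[
\rho(s)=\sum_{k=1}^{n-1}\frac{\rho(\mathsf{F}_k^{-1}(s))}{|\mathsf{F}'(\mathsf{F}_k^{-1}(s))|},\qquad s\in[-\t,\t],
\]
and the task reduces to an explicit identity.

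First, by direct inversion I would obtain $\mathsf{F}_k^{-1}(s)=\t^2(a_{k,n}-b_{k,n}s)/(a_{k,n}s+b_{k,n})$ and $\mathsf{F}'(t)=-\t^2/(a_{k,n}t+b_{k,n}\t^2)^2$; combining these via the key cancellation $a_{k,n}\mathsf{F}_k^{-1}(s)+b_{k,n}\t^2=\t^2/(a_{k,n}s+b_{k,n})$ collapses the $k$-th summand to
\[
\frac{2\t}{(a_{k,n}s+b_{k,n})^2-\t^2(a_{k,n}-b_{k,n}s)^2}.
\]

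With $\alpha=\pi/n$, the half-angle identities
$a_{k,n}+\t b_{k,n}=\cos((k-\tfrac12)\alpha)/\cos(\alpha/2)$,
$b_{k,n}-\t a_{k,n}=\sin((k-\tfrac12)\alpha)/\cos(\alpha/2)$
(and their $\pm$ analogues) factor the denominator as a difference of squares into $(s\cos\theta_k+\sin\theta_k)(s\cos\theta_{k+1}+\sin\theta_{k+1})/\cos^2(\alpha/2)$ with $\theta_k=(k-\tfrac12)\alpha$. Parametrising $s=\tan\phi$ and setting $\psi=\phi+\alpha/2$ converts each factor via $s\cos\theta+\sin\theta=\sin(\theta+\phi)/\cos\phi$ and reduces the invariance equality to the purely trigonometric identity
\[
\sum_{k=1}^{n-1}\frac{1}{\sin(\psi+(k-1)\alpha)\sin(\psi+k\alpha)}=\frac{1}{\sin\psi\,\sin(\alpha-\psi)}.
\]
The standard identity $\frac{1}{\sin A\sin B}=(\cot A-\cot B)/\sin(B-A)$ with $B-A=\alpha$ telescopes the left-hand side to $(\cot\psi-\cot(\psi+(n-1)\alpha))/\sin\alpha$, and the relation $(n-1)\alpha=\pi-\alpha$ together with the $\pi$-periodicity of $\cot$ matches it to the right-hand side.

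The main obstacle I anticipate is the bookkeeping through the half-angle reductions and confirming that the branches indexed $k=1,\ldots,n-1$ in \eqref{eq:F_k} correspond to the consecutive $\theta_k$'s in the order the telescoping requires; once this symmetric form is reached, the endpoint cancellation is essentially forced by $\sin(n\alpha)=\sin\pi=0$.
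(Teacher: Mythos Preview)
Your argument is correct; the transfer--operator identity indeed reduces to the telescoping trigonometric sum you state, and the endpoint cancellation via $(n-1)\alpha=\pi-\alpha$ is exactly what closes it.

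The paper takes a different route to the same telescoping. Rather than passing to the angular variable $s=\tan\phi$ and summing $1/(\sin(\psi+(k-1)\alpha)\sin(\psi+k\alpha))$ via the cotangent--difference trick, it stays in the $t$-variable throughout: it writes $\rho(t)=\frac{1}{t-\t}-\frac{1}{t+\t}$ and observes that the right-hand side of the invariance equation is the logarithmic derivative of $\prod_{k=1}^{n-1}\frac{F_k^{-1}(t)-\t}{F_k^{-1}(t)+\t}$. Using the tangent addition formula it then rewrites each factor as $-\dfrac{t+\mathrm{t}_{2n,2k+1}}{t+\mathrm{t}_{2n,2k-1}}$ (times a symmetric constant), so the product telescopes directly to $\dfrac{t-\t}{t+\t}$, whose logarithmic derivative is $\rho$. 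In effect the paper telescopes \emph{before} differentiating, so the Jacobian factors are absorbed for free and no change of variables is needed. Your approach is a bit more computational but has the virtue of making the underlying rotational structure of the branches explicit in the angular variable; the paper's is shorter and entirely algebraic. Both hinge on the same shift symmetry $\theta_k\mapsto\theta_{k+1}$, just expressed multiplicatively versus additively.
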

	\begin{proof}
		
		We have to show that function $\rho$ satisfies the equation
		\begin{equation}
		\label{eq:inv_rho}
		\rho(t)=\sum\limits_{k=1}^{n-1}\rho\left(F_{k}^{-1}(t)\right)
		\left|\left(F_k^{-1}\right)'\right|
		\end{equation}
		
		Since $\rho(t)=\dfrac{1}{t-\t}-\dfrac{1}{t+\t}$ we have to check that 
		\[ \dfrac{1}{t-\t}-\dfrac{1}{t+\t}=\sum\limits_{k=1}^{n-1}\left(
		\frac{\left|\left(F_k^{-1}\right)'\right|}{F_k^{-1}(t)-\t}-
		\frac{\left|\left(F_k^{-1}\right)'\right|}{F_k^{-1}(t)+\t}\right)=\left(\ln
		\prod\limits_{k=1}^{n-1} \frac{F^{-1}_k(t)-\t}{F_k^{-1}(t)+\t}\right)'
		 \]
		 
		 Introducing the expression for the preimage
		 \[F_k^{-1}=\dfrac{\t^2(b_{k,n}t-a_{k,n})}{-a_{k,n} t-b_{k,n}}\]

and denoting $\tan \frac{\pi k}{n}$ by $\mathrm{t}_{n,k}$, we get

$$\prod\limits_{k=1}^{n-1} 
\dfrac{F^{-1}_k(t)-\t}{F_k^{-1}(t)+\t}=\prod\limits_{k=1}^{n-1} 
\dfrac{\t(1- t \mathrm{t}_{n,k})+(t+\mathrm{t}_{n,k})}
{\t(1-t \mathrm{t}_{n,k})-(t+\mathrm{t}_{n,k})}=$$

$$=\prod\limits_{k=1}^{n-1} -\frac{t (1-\t 
\mathrm{t}_{n,k})+(\mathrm{t}_{n,k}+\t)}{t (1+\t 
\mathrm{t}_{n,k})+(\mathrm{t}_{n,k}-\t)}.$$
	Since $\t=\tan\frac{\pi}{2n}$ we can rewrite $\mathrm{t}_{n,k}\pm \t$ 
	as $\mathrm{t}_{2n,2k\pm 1}(1\mp \t \mathrm{t}_{n,k})$ and so

$$\prod\limits_{k=1}^{n-1} 
\dfrac{F^{-1}_k(t)-\t}{F_k^{-1}(t)+\t}=\prod\limits_{k=1}^{n-1} 
\dfrac{1-\t \mathrm{t}_{n,k}}{1+\t\mathrm{t}_{n,k}} 
\prod\limits_{k=1}^{n-1} 
-\dfrac{t+\mathrm{t}_{2n,2k+1}}{t+\mathrm{t}_{2n,2k-1}}.$$

The first product equals to one, since 
$\mathrm{t}_{n,k}=-\mathrm{t}_{n,n-k}$.
The last product is telescopic and so it is equal to $\dfrac{t-\t}{t+\t}$. 
Since $\left(\ln \dfrac{t-\t}{t+\t}\right)'=\rho(t)$ the theorem is proven. 
\end{proof}

\begin{cor}
	Original density equals 
	$\rho(\varphi)=\frac{\sin\frac{\pi}{n}}{\cos\frac{\pi}{n}-\cos\vphi
	}.$
	\end{cor}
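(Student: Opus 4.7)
The plan is to obtain the corollary purely by change of variables in the theorem, translating the invariant density $\rho(t) = \frac{2\mathsf{t}_*}{t^2 - \mathsf{t}_*^2}$ on the chord $[-\mathsf{t}_*,\mathsf{t}_*]$ into an angular density on the corresponding arc of $\mathbb{S}^1$. The stereographic formulas $\mathbf{z} = \bigl(\tfrac{2t}{1+t^2}, \tfrac{1-t^2}{1+t^2}\bigr)$ are nothing but the tangent half-angle substitution $t = \tan(\varphi/2)$, where $\varphi$ is the central angle measured from the tangency point $A_1$. So the work reduces to pushing forward $\rho(t)\,dt$ under $\varphi \mapsto \tan(\varphi/2)$ and simplifying.

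First I would record the Jacobian $\frac{dt}{d\varphi} = \frac{1}{2\cos^2(\varphi/2)} = \frac{1}{1+\cos\varphi}$, so that the new density is $\tilde\rho(\varphi) = \rho\bigl(\tan(\varphi/2)\bigr)\cdot\frac{dt}{d\varphi}$. The key algebraic step is to rewrite the denominator $t^2 - \mathsf{t}_*^2 = \tan^2(\varphi/2) - \tan^2(\pi/(2n))$ in terms of $\cos\varphi$ and $\cos(\pi/n)$. Using the identity
\begin{equation*}
\tan^2 a - \tan^2 b = \frac{\sin(a-b)\sin(a+b)}{\cos^2 a\,\cos^2 b},
\end{equation*}
together with the product-to-sum formula $\sin(a-b)\sin(a+b) = \tfrac{1}{2}(\cos 2b - \cos 2a)$ applied to $a=\varphi/2$, $b=\pi/(2n)$, I get
\begin{equation*}
t^2 - \mathsf{t}_*^2 = \frac{\cos(\pi/n) - \cos\varphi}{2\cos^2(\varphi/2)\,\cos^2(\pi/(2n))}.
\end{equation*}

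Combining this with $2\mathsf{t}_* = 2\sin(\pi/(2n))/\cos(\pi/(2n))$ and the double-angle relation $2\sin(\pi/(2n))\cos(\pi/(2n)) = \sin(\pi/n)$, the factors of $\cos^2(\varphi/2)$ cancel against the Jacobian and all $\cos(\pi/(2n))$ terms consolidate, leaving
\begin{equation*}
\tilde\rho(\varphi) = \frac{\sin(\pi/n)}{\cos(\pi/n) - \cos\varphi},
\end{equation*}
as claimed. Finally, I would note that $\tilde\rho$ is defined on the fundamental arc $\varphi \in (-\pi/n,\pi/n)$; on this range both numerator and denominator have consistent sign, so the density is genuinely positive, and the full circular density in the introduction is obtained by copying $\tilde\rho$ onto each of the $n$ congruent arcs via the rotation, which accounts for the fractional part $\{n\varphi/(2\pi)\}$ that appears there. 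No step is really an obstacle; the only delicate point is the trigonometric rewriting of $\tan^2(\varphi/2) - \tan^2(\pi/(2n))$, and with the identity above it is immediate.
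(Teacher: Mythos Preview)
Your argument is correct and is precisely the change-of-variables computation the paper leaves implicit (the corollary is stated without proof there). One small slip in your description: with the stereographic parametrization $\mathbf{z}=\bigl(\tfrac{2t}{1+t^2},\tfrac{1-t^2}{1+t^2}\bigr)$ the value $t=0$ corresponds to the point $(0,1)$ directly below the vertex $P_1$, i.e.\ the midpoint of the arc $A_nA_1$, not to the tangency point $A_1$ itself (which sits at $t=\mathsf{t}_*$, $\varphi=\pi/n$). This does not affect your computation, since you use $t=\tan(\varphi/2)$ consistently, and it is exactly why the resulting density blows up at the endpoints $\varphi=\pm\pi/n$.
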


 \section{Discussion.}
 
 \subsection{Fixed points.}
 
 Transformation \eqref{eq:F_k} has $n-1$ fixed points. If $s_k$ is the 
 $k$-th fixed point of \eqref{eq:F_k} then $T s_k=e^{2\pi i k/n} 
 s_k$.Therefore $s_k$ correspond to the 
 periodic point of  
 the transformation \eqref{eq:Finite_Gauss} of period $\gcd(k,n)$ (see 
 Fig.\ref{fig:triangle}). 
 \begin{figure}[hbt]
 	\centering
 	\includegraphics[width=0.4\textwidth]{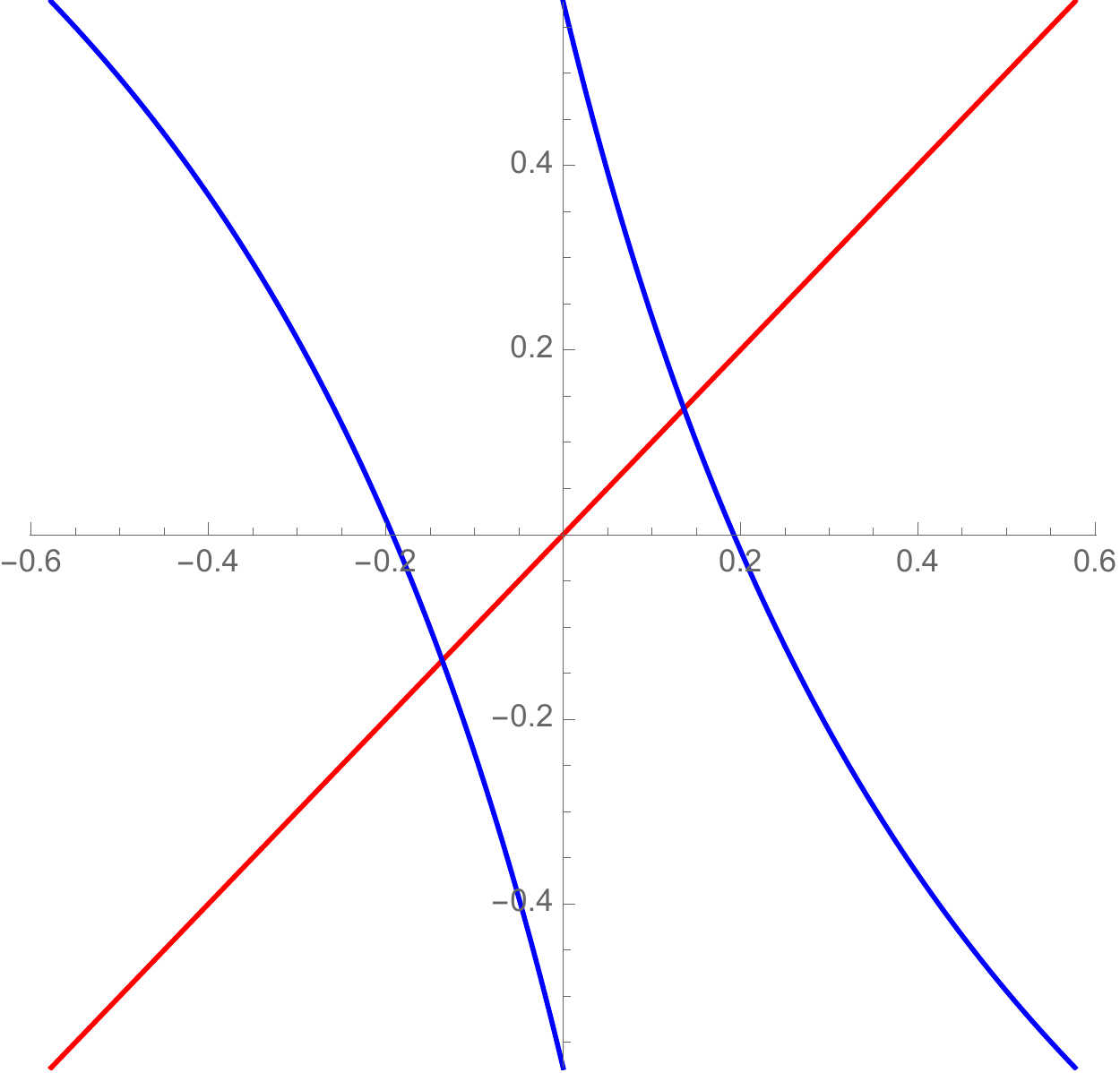}\qquad
 	\includegraphics[width=0.5\textwidth]{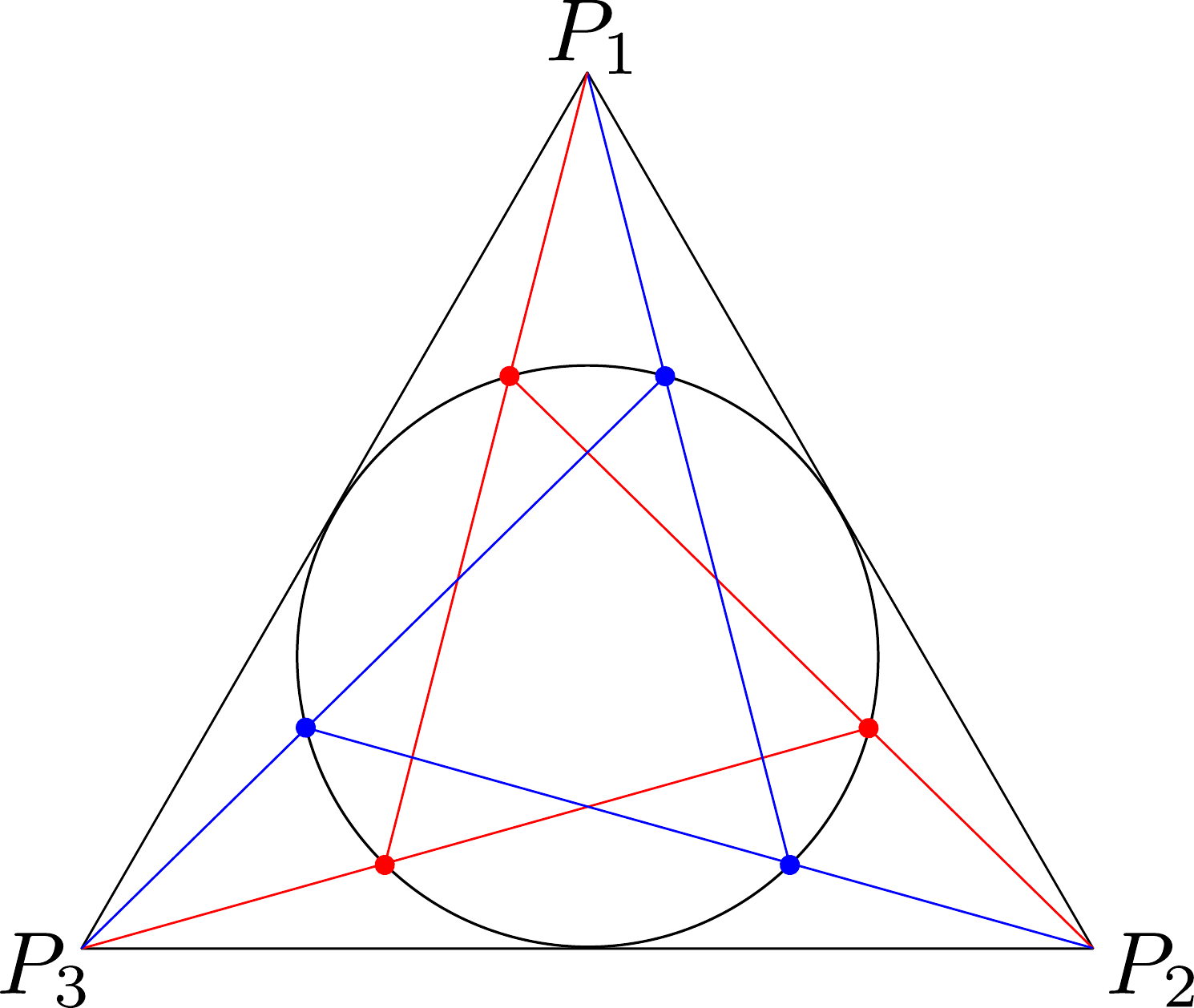}
 	\caption{Fixed point of Transformation 
 		\eqref{eq:F_k}.}\label{fig:triangle}
 \end{figure}

 If one orients the polygon $\mathbf{P}$ in such a way that the point $A_n$ 
 will belong to the vertical axis, then the arc $A_nA_1$ will correspond to 
 the interval $[0,\tan\frac{\pi}{n}]$ in $t$ variable. Then points 
 $Q^{-1}(A_k)$ will correspond to the values 
 $t_k=\frac{1}{k}\tan\frac{\pi}{n}$. 
 \begin{figure}[hbt]
 	\centering\includegraphics[width=0.4\textwidth]
 	{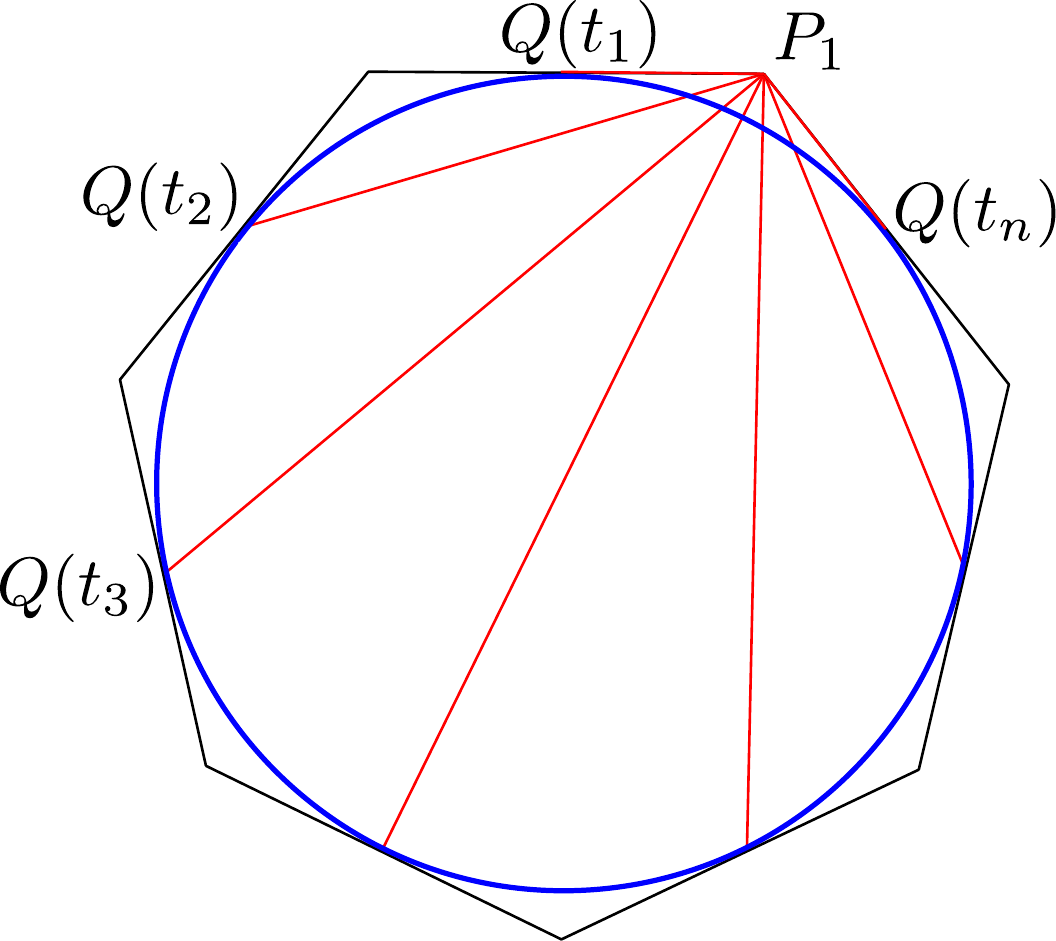}
 	\caption{More on Gau\ss{}}
 \end{figure}
 Thus, for example for $n=4$ points of 
 tangency will split the interval $[0,1]$ on three intervals $[0,\frac 
 13]\cup[\frac 13,\frac 12]\cup[\frac 12,1]$. And the corresponding 
 transformation will have the form
 \begin{equation}
 \label{eq:square}
 T(t)=\begin{cases}
 \frac{3t-1}{t-1},&t\in[0,\frac 13]
 \\
 \frac{1}{t}-2,&t\in[\frac 13,\frac 12]\\
 \frac{t-1}{1-3t},& t\in [\frac 12,1]
 \end{cases}
 \end{equation} (compare with \cite{Romik})
Obviously, in this orientation any periodic point of the transformation 
$\mathsf{F}$ is the periodic point of some Moebius transformation with 
integer 
coefficients. Therefore, every periodic point of $\mathsf{F}$ is a a root of 
some 
quadratic polynomial. However, for the moment we do not know if any 
quadratic irrationality  can be achieved as a periodic point of 
transformation $\mathsf{F}$.

 \subsection{Piecewise Moebius on the interval}
 
 Transformation \eqref{eq:transform_gen} for the case $n=3$ can be 
 considered not only for regular triangle. Projectively mapping generic 
 triangle to the regular one, we will obtain the transformation 
 \eqref{eq:F_k}  of the form
 $$T_a t=\begin{cases}
 \frac{at+1}{(a-2)t-1},\qquad t\in[-1,0]\\
 \frac{at-1}{-(a-2)t-1},\qquad t\in [0,1]
 \end{cases}$$ 
It is easy to see that such transformation has invariant density 
$\rho(t)~=~\frac{1}{t^2-1}$. It is still not clear how does finite Gauss 
transform relate to the other known linear fractional transformations of the 
interval (see \cite{Grochenig} and references therein.)

\subsection{Higher dimensions.}
 It would be interesting to look at the $3$-dimensional analog of 
 transformation $\mathsf{F}$. Given vertices of the tetrahedron, provide the 
 partition of the sphere on the four regions, defined by the perpendicular 
 bisector planes to the edges of the tetrahedron. Each region correspond to 
 the nearest vertex. Thus one can define the transformation 
 via formulas \eqref{eq:Finite_Gauss}.
 
 \begin{figure}[hbt]
 	\centering\includegraphics[width=0.5\textwidth]{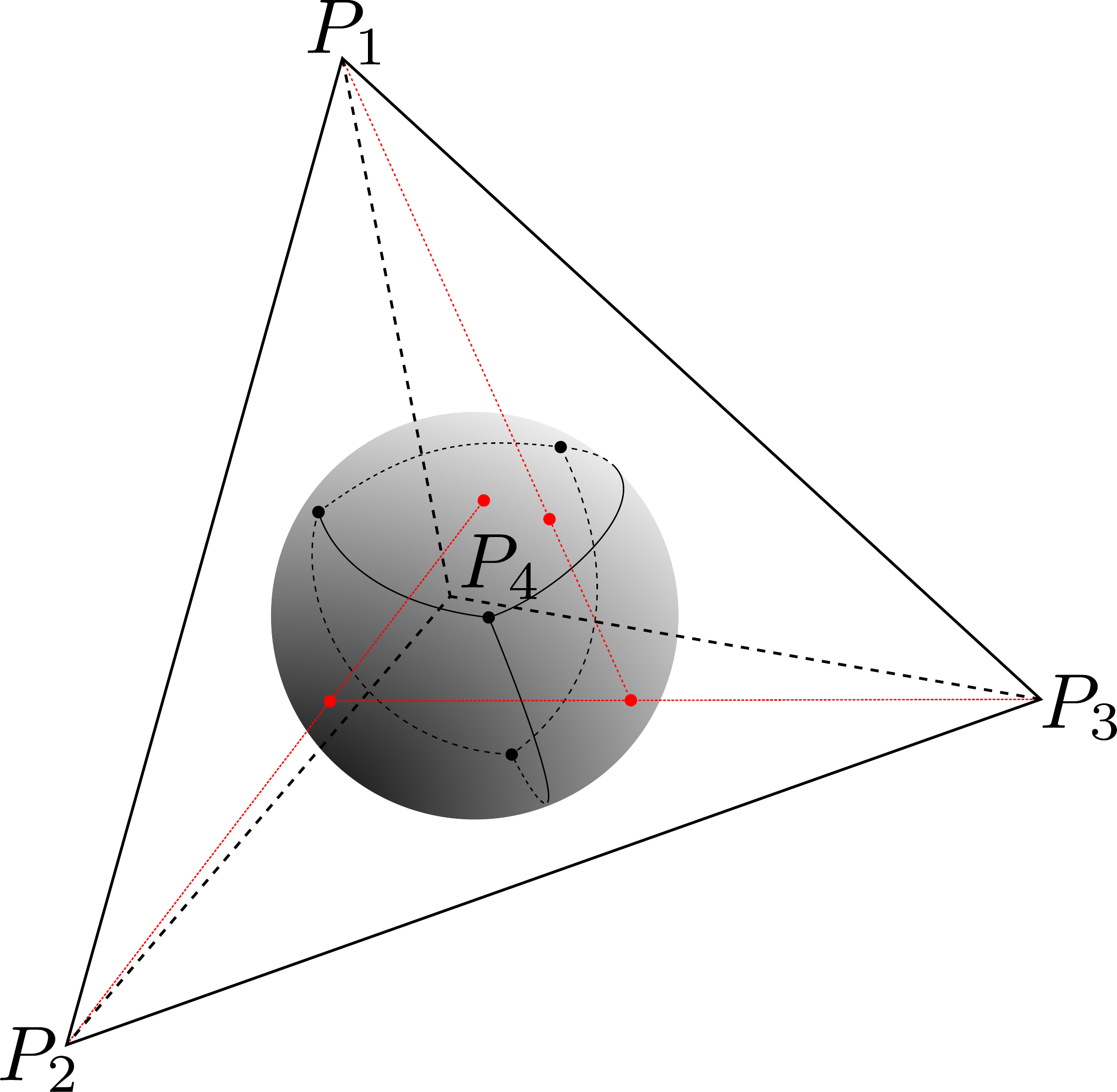}
 	\caption {$3d$ Finite Gauss map.}
 \end{figure}

For the sphere we have the following parametrization: 
$$\z=\left(\frac{2u}{1+u^2+v^2},\frac{2v}{1+u^2+v^2},
\frac{1-u^2-v^2}{1+u^2+v^2}\right)$$
and $P_1=(0,0,3)$. Therefore from \eqref{eq:transform_gen} we compute 
for $z$ from the top part of the sphere (see Fig. 5)
$Tz=Q(u,v)$ where $$
Q(u,v)=\left(\dfrac{u}{2(u^2+v^2)},\dfrac{v}{2(u^2+v^2)}\right).$$
 
 Our experiments shows that the invariant density is infinite at the points of 
 tangency of the tetrahedron and the sphere and is smooth everywhere else. 
 However, we still do not know 
 the closed formula for it.

		\bibliographystyle{plain}
	\bibliography{sinnov}

\begin{thebibliography}{1}

\bibitem{Bogoyavlenski}
O.~I. Bogojavlenskii.
\newblock Qualitative theory of homogeneous cosmological models {II}.
\newblock {\em Trudy Sem. Petrovsk.}, (Vyp. 2):67--112, 1976.

\bibitem{Bogoyavlenski_book}
O.~I. Bogoyavlenskii.
\newblock {\em { {M}etody kachestvennoi teorii dinamicheskikh sistem v
  astrofizike i gazovoi \ dinamike}}.
\newblock ``Nauka'', Moscow, 1980.

\bibitem{Grochenig}
Karlheinz Gr{\"o}chenig and Andrew Haas.
\newblock Invariant measures for certain linear fractional transformations mod
  {$1$}.
\newblock {\em Proc. Amer. Math. Soc.}, 127(12):3439--3444, 1999.

\bibitem{Romik}
Dan Romik.
\newblock The dynamics of {P}ythagorean triples.
\newblock {\em Trans. Amer. Math. Soc.}, 360(11):6045--6064, 2008.

\end{thebibliography}
\end{document}